\newcommand{\integers}{\mathbb Z}
\newcommand{\cd}{\text{cd}}
\theoremstyle{definition}
\newtheorem{thm}{Theorem}%[section]
\newtheorem{prob}{Problem}
\theoremstyle{remark}
\newtheorem{claim}{Claim}
\newcommand{\showcomments}{yes}
\renewcommand{\showcomments}{no}
\newsavebox{\commentbox}
\title{A curiously cubulated group}
\author{Kasia Jankiewicz}
\address{Dept. of Math. \\ University of Chicago, Chicago, Illinois, 60637}
\email{kasia@math.uchicago.edu}
\author[D.~T.~Wise]{Daniel T. Wise}
           \address{Dept. of Math. \& Stats.\\
                    McGill Univ. \\
                    Montreal, QC, Canada H3A 0B9 }
           \email{wise@math.mcgill.ca}
\begin{document}

\begin{abstract}
We construct a finitely generated 2-dimensional group that acts properly on a locally finite CAT(0) cube complex
but does not act properly on a finite dimensional CAT(0) cube complex.
\end{abstract}

\maketitle
%\vspace{-5mm}
Among its  extraordinary properties, 
Thompson's group $F$  acts freely and metrically properly on a CAT(0) cube complex \cite{Farley2003},
but $F$ does not act properly on a finite dimensional CAT(0) cube complex,
 since $F$ contains a copy of $\integers^\infty$. The purpose of this note is to produce a f.g.\ group $G$ with
the same property, but with $\cd(G)<\infty$.
We remark that there are tubular groups (e.g.\ Gersten's group) 
that act freely on infinite dimensional cube complexes,
but cannot act metrically properly \cite{WiseGerstenRevisited}, since Gersten's result applies in a cubical case \cite{Gersten94, HaglundSemiSimple, WoodhouseAxis}. 
Also, the wreath product $H\wr F_2$ with $H\neq 1$ finite, 
acts metrically properly on an infinite dimensional CAT(0) cube complex, 
but does not act metrically properly on any finite dimensional CAT(0) cube complex \cite{CornulierStadlerValette12}. 
%This groups is not torsion-free, but admits a $2$-dimensional model for the classifying space of proper actions.

\begin{thm}\label{thm:main}
There exists a f.g.\ group of cohomological dimension $2$ that acts freely (and thus metrically properly)  on a locally-finite CAT(0) cube complex but does not act 
freely on a finite dimensional CAT(0) cube complex.
\end{thm}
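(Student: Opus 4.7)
The plan is to construct $G$ as an explicit finitely generated 2-dimensional group, exhibit a wall structure on it whose Sageev dual gives the required locally finite cubulation, and then rule out every finite dimensional free action by producing a ``cubical dimension obstruction'' compatible with $\cd(G)=2$.

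For the construction of $G$, the natural candidates are graphs of groups (or iterated HNN extensions) built from free or surface vertex groups amalgamated along cyclic edge groups. The vertex and edge groups have $\cd \le 1$, and a Mayer--Vietoris argument (or simply the asphericity of the resulting standard 2-complex) will give $\cd(G)=2$. The graph-of-groups structure should be chosen so that, in addition to the walls coming from the Bass--Serre tree, there are infinitely many further ``transverse'' walls stable under the action; this is what will let us run Sageev's machine while also producing the obstruction later.

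Next I would equip $G$ with a wallspace structure, either by listing codimension-1 subgroups explicitly or by producing walls in the universal cover of the presentation complex. Applying Sageev's construction yields a CAT(0) cube complex $\widetilde{X}$ on which $G$ acts. Local finiteness reduces to showing that any compact subset of $G$ meets only finitely many walls, which is typically verified by a pigeonhole argument using the finite number of orbits of walls and the fact that each wall stabilizer is infinite-index in its normalizer. Freeness follows if the walls separate enough orbits: for any $1 \ne g \in G$ there is a wall $W$ with $W$ and $gW$ disjoint, so $g$ acts nontrivially on the hyperplane structure and hence on $\widetilde{X}$.

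The main obstacle will be the final step: showing $G$ admits no free action on any finite dimensional CAT(0) cube complex. Because $\cd(G)=2$ forbids $\Z^n$ for $n\ge 3$, we cannot invoke the usual flat obstruction as for Thompson's $F$. Instead I would look for an infinite family of elements $\{g_i\} \subset G$ that, in any free cubical action, are forced to translate along pairwise crossing hyperplane systems: for example, elements generating pairwise commuting cyclic subgroups up to conjugation, where the ``commuting pairs'' come from distinct amalgamated $\Z$'s in the graph of groups. By a standard argument (essential hyperplanes dual to commuting hyperbolic isometries must cross, and a $d$-dimensional cube complex admits at most $d$ pairwise crossing hyperplanes through any vertex) this forces the dimension of any cubulation to exceed any given $d$, while the subgroups generated never produce a $\Z^3$, keeping $\cd(G)=2$ intact. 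Making such a family of commuting hyperbolics appear inside a $\cd=2$ group is the delicate point, and it is where the specific choice of graph of groups in Step 1 must be tailored.
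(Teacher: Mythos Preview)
Your proposal has a genuine gap at exactly the point you flag as ``delicate'': the obstruction step does not work as sketched. The mechanism you suggest---an infinite family $\{g_i\}$ with ``pairwise commuting cyclic subgroups up to conjugation'' forcing arbitrarily many pairwise crossing hyperplanes in \emph{every} free cubical action---cannot be made compatible with $\cd(G)=2$. If $g_1,\dots,g_d$ are hyperbolic and genuinely pairwise commute in a way that forces $d$ pairwise crossing hyperplanes, then by the cubical Flat Torus Theorem they stabilize a $d$-flat and generate a subgroup containing $\Z^d$, contradicting $\cd(G)=2$ once $d\ge 3$. Weakening to ``commuting up to conjugation'' destroys the argument: conjugate or merely virtually-commuting elements impose no crossing condition on hyperplanes in an \emph{arbitrary} cubulation of $G$; the crossings you see in your chosen wallspace need not persist in another action. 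In short, in a $2$-dimensional group there is no elementary pairwise-commutation obstruction to finite cubical dimension, and your Step~1 graph of free/surface groups over cyclics gives no alternative source of one.

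The paper proceeds along an entirely different line and bypasses this difficulty by importing the obstruction from outside. It takes, for each $n$, a $2$-generator $C'(\frac16)$ group $\pi_1 A_n$ already known (from \cite{JankiewiczCubicalDimension}) not to act properly on any $n$-dimensional CAT(0) cube complex, and assembles all the $A_n$ into a single locally finite $C'(\frac16)$ $2$-complex $Y$ by stringing them along a ray and adding carefully chosen relator $2$-cells so that the generators of $\pi_1 X_n$ become words in those of $\pi_1 X_{n-1}$. This telescoping makes $\pi_1 Y$ four-generated; the global $C'(\frac16)$ condition gives asphericity and hence $\cd=2$; local finiteness of $Y$ makes Wise's dual cube complex locally finite and the action free; and a ``no missing shells'' argument shows each $\pi_1 A_n$ embeds in $\pi_1 Y$, so $\pi_1 Y$ cannot act properly on any $n$-dimensional cube complex for any $n$. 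The key input you are missing is precisely the existence of these $A_n$: the high cubical dimension is a deep fact about specific small-cancellation groups, not a consequence of commuting-elements combinatorics.
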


\begin{proof} 
For each $n\geq 1$ let $A_n$ be the $2$-complex of a $2$-generator $C'(\frac16)$ group that does not act properly on any $n$-dimensional CAT(0) cube complex. Such groups were constructed in \cite{JankiewiczCubicalDimension}. Moreover, we can choose $A_n$ so that its $2$-cells have boundary cycles of length $>12$.

Let $X_n = A_n\vee B_n$ where $B_n$ is a bouquet of two circles. Let $X$ be the union of a ray $r =[0,\infty]$ and $\displaystyle \sqcup_{n\geq 0} X_n$ where the basepoint of $X_n$ is identified with the vertex $n\in r$. Let $t_n$ denote the oriented edge $[n-1,n]$ of $r$. Let the basepoint of $X$ be the vertex $0 \in r$. We have $\pi_1 X = *_{n=0}^{\infty}\pi_1 X_n$.  
 Let $x_{n1},x_{n2}$ be closed paths in $X_n$ representing generators of $\pi_1 A_n$, and let $x_{n3},x_{n4}$ be closed paths representing generators of $\pi_1B_n$.

Consider the $2$-complex $Y$ obtained from $X$ by adding $2$-cells as follows:
for each $n\geq 1$ and  $i\in\{1,2,3,4\}$ add a $2$-cell $C_{ni}$ with boundary path
\[
t_nx_{ni}t_n^{-1}\gamma_{ni}
\]
where $\gamma_{ni}$ is a closed path in $X_{n-1}$ of the form 
\[
\gamma_{ni} =
\beta_{ni}^1\alpha_{ni}\beta_{ni}^2\cdots\alpha_{ni}\beta_{ni}^m
\]
 where $m\geq 12$, and where
 \begin{itemize}
 \item $\alpha_{ni} = x_{(n-1)i}$ for $i=1,2$, and $\alpha_{ni} = (x_{(n-1)(i-2)})^2$ for $i=3,4$;\\ (note that $\alpha_{ni}\in A_{n-1}$),
 \item $\{\beta_{ni}^j : 1\leq i\leq 4,\, 1\leq j\leq m\}$ 
is a collection of distinct closed paths of the same length in $B_{n-1}$.
 \end{itemize}

\begin{claim}The complex $Y$ is a $C'(\frac16)$-complex without periodic 2-cell attaching maps. Consequently, $\cd(\pi_1 Y)=2$.\end{claim}
As $X$ is a $C'(\frac16)$-complex, it suffices to verify the $C'(\frac16)$ condition for:
\begin{itemize}
\item a $2$-cell $C_{ni}$ and a $2$-cell $C$ of $X$,
\item a $2$-cell $C_{ni}$ and a $2$-cell $C_{n'i'}$.
\end{itemize}

If there is nontrivial piece between $C_{ni}$ and $C$, then $C$ lies in $A_n$ or $A_{n-1}$ and the piece is the first or second power of one of $\{x_{n1}, x_{n2}, x_{(n-1)1},x_{(n-1)2}\}$. 

The cells $C_{ni}$ and $C_{n'i'}$ can only overlap if $|n-n'|\leq 1$.
If $n' = n+1$, then 
the only piece between $C_{ni}$ and $C_{n'i'}$ is of the form $x_{ni}$.
In all the cases considered so far, pieces have length at most $2$, so the $C'(\frac16)$ condition is satisfied.

If  $n=n'$ and  $i - i'$ is odd, then any nontrivial piece is either $t_n$ or is contained in some $\beta_{ni}^j$. Suppose $n=n'$ and $i-i'$ is even. Since $\{\beta_{ni}^j\}$ are distinct, the maximal pieces are strictly contained in $\beta_{ni}^{j}\alpha_{ni}\beta_{ni}^{j+1}$, $t_n^{-1}\beta_{ni}^1$, or $\beta_{ni}^m t_n$. In these cases $C'(\frac16)$ condition holds
as $m\geq 12$.

Any $C'(\frac16)$-complex without periodic 2-cell attaching maps is aspherical (see e.g.\ \cite{LS77} and \cite{ChiswellCollinsHuebschmann81})
so $\cd (\pi_1Y) =2$.

\begin{claim} The group $\pi_1 Y$ is f.g.
\end{claim}
$\pi_1 Y$ is a quotient of $\pi_1 X$, so $\pi_1Y$ is generated by images of generators of $\pi_1 X$, i.e.\ the elements represented by $\bigcup_{n=0}^{\infty}\{t_1\cdots t_{n}x_{ni}t_{n}^{-1}\cdots t_{1}^{-1}\}_{i=1}^{4}$. By construction, $t_1\cdots t_{n}x_{ni}t_{n}^{-1}\cdots t_{1}^{-1}$ is homotopic in $Y$ to a closed path contained in a finite subcomplex $X_{[0,n-1]} = [0,n-1]\cup \bigcup_{i=0}^{n-1} X_i$ of $X$.  Specifically, 
\[[t_1\cdots t_{n}x_{ni}t_{n}^{-1}\cdots t_{1}^{-1}] = [t_1\cdots t_{n-1}\gamma_{ni}^{-1}t_{n-1}^{-1}\cdots t_{1}^{-1}] \,\,\text{ in }\pi_1Y.\] 
 Consequently, $\pi_1 Y$ is generated by $[x_{01}], [x_{02}], [x_{03}],[x_{04}]$.

\begin{claim}The complex $Y$ is locally finite.\end{claim}

Indeed, each $X_n$ is compact and intersects finitely many additional cells:\\ $\{t_{m}, C_{mi}: m\in\{n, n+1\}, i\in\{1,2,3,4\}\}$.

\begin{claim} The group $\pi_1 Y$ acts freely on a locally finite CAT(0) cube complex.
\end{claim}
As $Y$ is locally finite, its dual cube complex is locally finite~\cite[Thm~10.5]{WiseSmallCanCube04}. 
Since $Y$ is aspherical, $\pi_1Y$ is torsion-free.
Hence the action of $\pi_1 Y$ on the dual cube complex is free~\cite[Cor~10.7]{WiseSmallCanCube04}.

\begin{claim}The groups $\pi_1 A_n$ embed in $\pi_1 Y$. Consequently, $\pi_1 Y$ does not act properly on any finite dimensional CAT(0) cube complex.\end{claim}
$\pi_1A_n\hookrightarrow \pi_1 Y$ since $A_n\hookrightarrow Y$ has  \emph{no missing shells} \cite[Thm~13.3]{WiseCBMS2012}. As $\pi_1A_n$ does not act properly on an $n$-dimensional CAT(0) cube complex, $\pi_1Y$ does not.
\end{proof}

Groups with finite $C'(\frac 16)$ presentations are hyperbolic. However, the complex $Y$ in Theorem~\ref{thm:main} has infinitely many $2$-cells. We close with the following problem, which is open, even under the assumption that $G$ acts freely on a finite dimensional CAT(0) cube complex.
\begin{prob}
Let $G$ be a hyperbolic group.
Suppose $G$ acts freely on a CAT(0) cube complex.
Does $G$ act freely and cocompactly on a CAT(0) cube complex?
\end{prob}

\subsection*{Acknowledgements} We are grateful to the referee for helpful comments and corrections.
%%%%%%%%%%%%%%%%%%%%%%%%%%%%%%%%%%%%%%%%%%%%%%%%%%%%%%%%%%%%%%%%%%%%%%%%
%%                  BIBLIOGRAPHY
%%%%%%%%%%%%%%%%%%%%%%%%%%%%%%%%%%%%%%%%%%%%%%%%%%%%%%%%%%%%%%%%%%%%%%%%
\bibliographystyle{alpha}
\bibliography{wise}

\def\cprime{$'$} \def\polhk#1{\setbox0=\hbox{#1}{\ooalign{\hidewidth
  \lower1.5ex\hbox{`}\hidewidth\crcr\unhbox0}}} \def\cprime{$'$}
  \def\cprime{$'$} \def\polhk#1{\setbox0=\hbox{#1}{\ooalign{\hidewidth
  \lower1.5ex\hbox{`}\hidewidth\crcr\unhbox0}}}
\begin{thebibliography}{CCH81}

\bibitem[CCH81]{ChiswellCollinsHuebschmann81}
Ian~M. Chiswell, Donald~J. Collins, and Johannes Huebschmann.
\newblock Aspherical group presentations.
\newblock {\em Math. Z.}, 178(1):1--36, 1981.

\bibitem[CSV12]{CornulierStadlerValette12}
Yves Cornulier, Yves Stalder, and Alain Valette.
\newblock Proper actions of wreath products and generalizations.
\newblock {\em Trans. Amer. Math. Soc.}, 364(6):3159--3184, 2012.

\bibitem[Far03]{Farley2003}
Daniel~S. Farley.
\newblock Finiteness and {$\rm CAT(0)$} properties of diagram groups.
\newblock {\em Topology}, 42(5):1065--1082, 2003.

\bibitem[Ger94]{Gersten94}
S.~M. Gersten.
\newblock The automorphism group of a free group is not a {${\rm CAT}(0)$}
  group.
\newblock {\em Proc. Amer. Math. Soc.}, 121(4):999--1002, 1994.

\bibitem[Hag07]{HaglundSemiSimple}
Fr{\'e}d{\'e}ric Haglund.
\newblock Isometries of {C}{A}{T}(0) cube complexes are semi-simple.
\newblock pages 1--17, 2007.

\bibitem[Jan18]{JankiewiczCubicalDimension}
Kasia Jankiewicz.
\newblock Lower bounds on cubical dimension of ${C}'(1/6)$ groups.
\newblock {\em arXiv:1901.00930}, pages 1--13, 2018.

\bibitem[LS77]{LS77}
Roger~C. Lyndon and Paul~E. Schupp.
\newblock {\em Combinatorial group theory}.
\newblock Springer-Verlag, Berlin, 1977.
\newblock Ergebnisse der Mathematik und ihrer Grenz\-gebiete, Band 89.

\bibitem[Wis04]{WiseSmallCanCube04}
Daniel~T. Wise.
\newblock Cubulating small cancellation groups.
\newblock {\em GAFA, Geom. Funct. Anal.}, 14(1):150--214, 2004.

\bibitem[Wis12]{WiseCBMS2012}
Daniel~T. Wise.
\newblock {\em From riches to raags: 3-manifolds, right-angled {A}rtin groups,
  and cubical geometry}, volume 117 of {\em CBMS Regional Conference Series in
  Mathematics}.
\newblock Published for the Conference Board of the Mathematical Sciences,
  Washington, DC, 2012.

\bibitem[Wis14]{WiseGerstenRevisited}
Daniel~T. Wise.
\newblock Cubular tubular groups.
\newblock {\em Trans. Amer. Math. Soc.}, 366(10):5503--5521, 2014.

\bibitem[Woo17]{WoodhouseAxis}
Daniel~J. Woodhouse.
\newblock A generalized axis theorem for cube complexes.
\newblock {\em Algebr. Geom. Topol.}, 17(5):2737--2751, 2017.

\end{thebibliography}
%\bibliography{wise}

\end{document}